\documentclass{article}
\usepackage{amsmath, amssymb, amsthm}

\usepackage{geometry}
\usepackage{hyperref}
\usepackage[T1]{fontenc}

\usepackage{indentfirst}
\newtheorem{theorem}{Theorem}[section]

\newtheorem*{theorem*}{Theorem}
\newtheorem*{remark*}{Remark}
\newtheorem*{problem*}{Problem}
\newtheorem*{conjecture*}{Conjecture}
\newtheorem{lemma}[theorem]{Lemma}

\usepackage{geometry}
\usepackage{hyperref}
\usepackage[T1]{fontenc}
\usepackage{indentfirst}
\theoremstyle{remark}
\newtheorem{remark}{Remark}

\usepackage{graphicx} 

\title{Mean Values and Normal Order of a Successive-Iterate Ratio of Dedekind's Arithmetic Function}

\author{Aimin Guo$^{*}$}

\date{}

\begin{document}

\maketitle
\vspace{-2.0em}

\begin{center}
\small

\textit{School of Mathematics and Statistics}\\
\textit{Anhui Normal University}\\
\textit{Wuhu 241002, P.~R.~China}

\vspace{0.4em}

$^{*}$Corresponding author:\\
\underline{18339681864@163.com}

\end{center}

\begin{abstract}
We obtain asymptotic formulas for the three sums
\[
\sum_{n\le x}\frac{\psi(\psi(n))}{\psi(n)},\qquad
\sum_{n\le x}\frac{\psi(n)}{\psi(\psi(n))},\qquad
\sum_{n\le x}\log\frac{\psi(\psi(n))}{\psi(n)},
\]
where \(\psi\) denotes Dedekind's arithmetic function.
We also determine the normal order of the successive-iterate ratio;
more precisely,
\[
\frac{\psi(\psi(n))}{\psi(n)}
\sim
\frac{6e^\gamma}{\pi^2}\log_3 n
\]
for almost all positive integers \(n\).
\end{abstract}

\noindent
\textbf{Keywords:}
Dedekind's arithmetic function;
iterates of arithmetic functions;
mean values;
normal order.

\noindent
\textbf{2020 Mathematics Subject Classification:}
11N37; 11A25.

\section{Introduction}

For each positive integer \(n\), let
\[
G_n=(\mathbb Z/n\mathbb Z)^2,
\]
and let \(C(n)\) denote the number of cyclic subgroups of \(G_n\) of
order \(n\). Since the number of elements of order \(n\) in \(G_n\) is
\[
n^2\prod_{p\mid n}\left(1-\frac1{p^2}\right),
\]
and each cyclic subgroup of order \(n\) has \(\phi(n)\) generators, we
obtain
\[
\begin{aligned}
C(n)
&=
\frac{n^2\displaystyle\prod_{p\mid n}(1-p^{-2})}{\phi(n)}\\
&=
n\prod_{p\mid n}\left(1+\frac1p\right)
=
\psi(n),
\end{aligned}
\]
where \(\psi\) denotes Dedekind's arithmetic function.

Thus \(\psi(n)\) has a natural algebraic interpretation as the number
of maximal-order cyclic subgroups of \(G_n\). This interpretation
suggests considering the iteration
\[
n,\qquad C(n),\qquad C(C(n)),\qquad\ldots,
\]
or, equivalently,
\[
n,\qquad \psi(n),\qquad \psi(\psi(n)),\qquad\ldots.
\]
Accordingly, we study the successive-iterate ratio
\[
R_\psi(n)
:=
\frac{C(C(n))}{C(n)}
=
\frac{\psi(\psi(n))}{\psi(n)}.
\]

The behavior of iterates of arithmetic functions has been studied from
both normal-order and mean-value points of view. Erd\H{o}s, Granville,
Pomerance and Spiro~\cite{EGPS} developed systematic methods for the
normal behavior of iterates of arithmetic functions, with particular
emphasis on successive iterates of Euler's function. In particular,
they determined the normal order of ratios of consecutive iterates of
\(\phi\).

Mean-value questions for the first two iterates of Euler's function
were subsequently studied by Warlimont~\cite{Warlimont}. He considered
the three sums
\[
\sum_{n\le x}\frac{\phi(n)}{\phi(\phi(n))},
\qquad
\sum_{n\le x}\frac{\phi(\phi(n))}{\phi(n)},
\qquad
\sum_{n\le x}\log\frac{\phi(n)}{\phi(\phi(n))},
\]
and obtained asymptotic formulas for each of them. These results
naturally suggest the corresponding mean-value problem for Dedekind's
function.

Compositions involving Dedekind's function have also been studied from
several other perspectives. S\'andor~\cite{Sandor2005} investigated
inequalities and related problems for compositions of
\(\phi,\psi\), and \(\sigma\), while S\'andor and
T\'oth~\cite{SandorToth} developed general extremal-order results for
various compositions of arithmetical functions. Related average-order
and density questions for compositions of classical arithmetic
functions were considered by De Koninck and Luca~\cite{DeKoninckLuca}.

The purpose of the present paper is to establish the corresponding
Dedekind-\(\psi\) analogue of Warlimont's mean-value results.
 We prove asymptotic formulas for
\[
\sum_{n\le x}R_\psi(n),\qquad
\sum_{n\le x}\frac1{R_\psi(n)},\qquad
\sum_{n\le x}\log R_\psi(n).
\]
The identity
\[
R_\psi(n)
=
\prod_{p\mid\psi(n)}
\left(1+\frac1p\right)
\]
shows that these mean values are governed by the distribution of the
prime divisors of \(\psi(n)\). Our arguments make use of estimates for
small and large prime divisors of \(\psi(n)\) established in
Guo et al.~\cite{Guo}, together with classical prime-product and sieve
estimates.

As a companion result, we also obtain the normal order
\[
R_\psi(n)
\sim
\frac{6e^\gamma}{\pi^2}\log_3 n
\]
for almost all positive integers \(n\). Thus the mean-value results and
the normal-order result provide complementary descriptions of the
growth of the subgroup-counting function \(C(n)\) under one further
iteration.

Throughout the paper, \(p,q,r\) denote primes, \(\gamma\) denotes
Euler's constant, and \(\log_k x\) denotes the \(k\)-fold iterate of
the natural logarithm.

We now state the main results of this paper.

\begin{theorem}\label{thm:mean-values}
As \(x\to\infty\), we have
\begin{align}
\sum_{n\le x}
\frac{\psi(\psi(n))}{\psi(n)}
&\sim
\frac{6e^\gamma}{\pi^2}\,x\log_3 x,
\label{eq:mean-psi}\\
\sum_{n\le x}
\frac{\psi(n)}{\psi(\psi(n))}
&\sim
\frac{\pi^2e^{-\gamma}}{6}\,
\frac{x}{\log_3 x},
\label{eq:mean-reciprocal-psi}\\
\sum_{n\le x}
\log\frac{\psi(\psi(n))}{\psi(n)}
&\sim
x\log_4 x.
\label{eq:log-mean-psi}
\end{align}
\end{theorem}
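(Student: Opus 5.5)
We write $R(n)=R_\psi(n)=\prod_{p\mid\psi(n)}(1+1/p)$ and take $y=y(x)=\log_2 x$ (possibly adjusted by slowly varying factors) as the threshold separating \emph{small} primes from \emph{large} primes. The plan is to show that for almost all $n\le x$, in a sense strong enough to control moments, every prime $p\le y$ divides $\psi(n)$, while the primes $p>y$ dividing $\psi(n)$ contribute a factor $1+o(1)$ on average. Given this,
\[
R(n)\approx\prod_{p\le y}\Bigl(1+\frac1p\Bigr)=\frac{\prod_{p\le y}(1-p^{-2})}{\prod_{p\le y}(1-p^{-1})}\sim\frac{6}{\pi^2}e^\gamma\log y=\frac{6e^\gamma}{\pi^2}\log_3 x
\]
by Mertens' theorem. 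The three formulas \eqref{eq:mean-psi}, \eqref{eq:mean-reciprocal-psi} and \eqref{eq:log-mean-psi} then follow, provided the exceptional sets and tails can be controlled in the appropriate norms.

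\textbf{Small primes.} A prime $p$ divides $\psi(n)$ as soon as some prime $q\mid n$ satisfies $q\equiv-1\pmod p$. The number of $n\le x$ having no such prime factor is $\ll x\exp\bigl(-c\log_2 x/(p-1)\bigr)$, by a sieve / Brun--Titchmarsh bound or the Guo et al.\ small-prime estimates. Summing over $p\le y_0:=\log_2 x/(\log_3 x)^2$ shows that, outside a set of $o(x)$ integers, $R(n)\ge(1+o(1))\frac{6e^\gamma}{\pi^2}\log_3 x$. Since $\log y_0\sim\log_3 x$, this lower bound holds for almost all $n$. It immediately yields the lower bounds in \eqref{eq:mean-psi} and \eqref{eq:log-mean-psi}, and the upper bound in \eqref{eq:mean-reciprocal-psi}, because $1/R\le 1$ so the exceptional set contributes $o(x)$, which is negligible against $x/\log_3 x$ once the savings are quantified.

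\textbf{Large primes.} For the reverse inequalities we bound the tail in the first moment:
\[
\sum_{n\le x}\ \sum_{\substack{p\mid\psi(n)\\ p>y_0}}\frac1p\;\le\;\sum_{p>y_0}\frac1p\,\#\{n\le x:\ p\mid\psi(n)\}.
\]
We have $p\mid\psi(n)$ if $p^2\mid n$ or some $q\mid n$ has $q\equiv-1\pmod p$. Hence
\[
\#\{n\le x:\ p\mid\psi(n)\}\ll\frac{x}{p^2}+x\sum_{q\equiv-1\,(p)}\frac1q\ll\frac{x}{p^2}+\frac{x\log_2 x}{p},
\]
using Brun--Titchmarsh together with Pomerance's bound $\sum_{q\le x,\,q\equiv-1(p)}1/q\ll\log_2 x/p$. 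This gives a tail of $\ll x\log_2 x/y_0=x(\log_3 x)^2$. That is too large, so the threshold must be raised to $y_1=\log_2 x\,(\log_3 x)^2$. Then the tail is $\ll x/\log_3 x$, i.e.\ $o(x)$ on average, while the primes in $(y_0,y_1]$ contribute at most $\prod_{y_0<p\le y_1}(1+1/p)=1+O(\log_4 x/\log_3 x)$. Since $\log(1+t)\le t$, we get for every $n$
\[
\log R(n)\le\sum_{p\le y_1}\log(1+1/p)+\sum_{p\mid\psi(n),\,p>y_1}\frac1p,
\]
which gives the upper bound in \eqref{eq:log-mean-psi}.

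\textbf{Moments of $R$.} For \eqref{eq:mean-psi} we write $R(n)\le P(y_1)\exp(T(n))$, where $P(y_1)=\prod_{p\le y_1}(1+1/p)$ and $T(n)=\sum_{p\mid\psi(n),\,p>y_1}1/p$. It then suffices to show that $\frac1x\sum_{n\le x}e^{T(n)}=1+o(1)$. We expand $e^{T}\le\prod_{p\mid\psi(n),\,p>y_1}(1+2/p)$ as a sum over squarefree $d$ composed of primes $>y_1$ with $d\mid\psi(n)$, and use a multiplicative count $\#\{n\le x:\ d\mid\psi(n)\}\ll x\,(c\log_2 x)^{\omega(d)}/d$. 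This estimate, which is where the Guo et al.\ large-prime bounds are used, gives
\[
\frac1x\sum_{n\le x}e^{T(n)}\le\prod_{p>y_1}\Bigl(1+\frac{C\log_2 x}{p^2}\Bigr)=1+o(1).
\]
For \eqref{eq:mean-reciprocal-psi} we need the matching lower bound on $\sum 1/R$. Since $1/R(n)\ge 1/(P(y_1)e^{T(n)})$, Jensen's inequality (or simply $e^{-T}\ge1-T$) combined with the first-moment bound on $T$ settles it. The upper bound for $\sum 1/R$ requires that the exceptional set from the small-prime step have size $o(x/\log_3 x)$; we handle this by quantifying the bound in that step with $y_0$ chosen slightly smaller, which costs only a $1+o(1)$ factor in $\log y_0$.

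\textbf{Main obstacle.} I expect the hardest step to be the uniform divisor-count bound $\#\{n\le x:\ d\mid\psi(n)\}\ll x(c\log_2 x)^{\omega(d)}/d$ for $d$ with several large prime factors, and specifically keeping the constants under control in the exponential moment. If it proves troublesome, the fallback is to truncate $T$ at $p\le x^{1/\log_2 x}$ and discard the remaining $n$ via an $L^\infty$ bound $R(n)\ll\log_2 x$.
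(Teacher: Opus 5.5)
Your proofs of \eqref{eq:mean-reciprocal-psi} and \eqref{eq:log-mean-psi}, and of the lower bound in \eqref{eq:mean-psi}, are sound, and they take a different route from the paper. The paper proves only upper bounds for $A=\sum_{n\le x}R_\psi(n)$ and $B=\sum_{n\le x}1/R_\psi(n)$, and gets both lower bounds from Cauchy--Schwarz, $\lfloor x\rfloor^2\le AB$. So its lower bound for $B$ depends on the hard upper bound for $A$. You get the lower bound for $B$ directly from $1/R_\psi(n)\ge e^{-T(n)}/P(y_1)\ge(1-T(n))/P(y_1)$ plus the first-moment bound for $T$ (Lemma~\ref{hpsi-lemma}). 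That decouples \eqref{eq:mean-reciprocal-psi} and \eqref{eq:log-mean-psi} from \eqref{eq:mean-psi}: they need only Lemmas~\ref{div-lemma} and \ref{hpsi-lemma}. Two small corrections. First, the small-prime step needs a \emph{lower} bound for $\sum_{q\le x,\ q\equiv-1\bmod p}1/q$, which comes from Siegel--Walfisz, not Brun--Titchmarsh; alternatively, just cite Lemma~\ref{div-lemma}. Second, $\sum_{p>y}p^{-2}\asymp1/(y\log y)$, so the tail at threshold $y=\log_2x$ is already $\ll x/\log_3x$, and raising the threshold to $y_1$ is unnecessary.

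The gap is the upper bound in \eqref{eq:mean-psi}. It rests entirely on the joint count $\#\{n\le x:d\mid\psi(n)\}\ll x(c\log_2x)^{\omega(d)}/d$, which you do not prove. It is also not what Guo et al.\ supply: the large-prime input used here, Lemma~\ref{hpsi-lemma}, is only a first-moment bound. Proving it means controlling how several primes of $d$ can share one covering prime $r\mid n$ with $r\equiv-1$ modulo their product. Summing over these set partitions with Brun--Titchmarsh for composite moduli gives roughly $x\,(C(\log_2x+\omega(d)))^{\omega(d)}/\phi(d)$. That would suffice after summing over $d$, but it is real work you have not done. Your fallback does not remove the issue. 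Primes above $x^{1/\log_2x}$ contribute only $o(1)$ to $T(n)$ for every $n$ anyway, and the exponential moment over $(y_1,x^{1/\log_2x}]$ still needs joint divisibility counts.

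The missing idea is the paper's, and it needs no multi-prime count:
\begin{itemize}
\item Bound $R_\psi(n)\le P(y_1)\prod_{p\mid\psi(n),\,p>y_1}(1+1/p)$ and expand the last product as $\sum\mu^2(d)/d$ over $d\mid\psi(n)$ composed of primes $>y_1$.
\item Use $S_\psi(x,d)\le S_\psi(x,P(d))$, and group the $d$ by $q=P(d)$: $\sum_{P(d)=q}\mu^2(d)/d=\frac1q\prod_{y_1<p<q}(1+\frac1p)\ll\frac{\log q}{q\log y_1}$.
\item The one-prime bound $S_\psi(x,q)\ll x\log_2x/q$ then makes the $d>1$ terms $\ll x\log_2x/(y_1\log y_1)=o(x)$.
\end{itemize}
This is exactly the $1+o(1)$ factor you need.
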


In addition to these mean-value asymptotics, we obtain the
corresponding normal-order result.

\begin{theorem}\label{thm:normal-order}
For almost all positive integers \(n\),
\[
\frac{\psi(\psi(n))}{\psi(n)}
\sim
\frac{6e^\gamma}{\pi^2}\log_3 n.
\]
\end{theorem}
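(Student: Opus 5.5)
We use the identity $R_\psi(n)=\prod_{p\mid\psi(n)}(1+1/p)$. The aim is to show that for almost all $n\le x$ the set of small primes dividing $\psi(n)$ is exactly the set of primes up to about $\log_2 x$, and that larger primes contribute $1+o(1)$. Fix $\varepsilon>0$ and put $y=\log_2 x/(\log_3 x)^{2}$ and $z=\log_2 x\,(\log_3 x)^2$. Split
\[
\log R_\psi(n)=\sum_{\substack{p\mid\psi(n)\\ p\le y}}\log(1+1/p)+\sum_{\substack{p\mid\psi(n)\\ y<p\le z}}\log(1+1/p)+\sum_{\substack{p\mid\psi(n)\\ p>z}}\log(1+1/p).
\]
It then suffices to show, for all but $o(x)$ integers $n\le x$, that the first sum equals $\sum_{p\le y}\log(1+1/p)$ and that the second and third sums are $o(1)$.

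\emph{Small primes.} A prime $p$ fails to divide $\psi(n)$ only if no prime $q\| n$ (or $q\mid n$) satisfies $q\equiv -1\pmod p$. By a standard sieve or Brun--Titchmarsh argument, which we take from Guo et al.~\cite{Guo} where this is exactly the small-prime-divisor estimate for $\psi(n)$, the number of $n\le x$ with no prime factor $q\equiv-1 \pmod p$ is $\ll x\exp(-c\log_2 x/p)$. Summing over $p\le y$ gives $\ll x\,y\exp(-c(\log_3 x)^2)=o(x)$. Hence almost all $n$ have $\prod_{p\mid \psi(n),\,p\le y}(1+1/p)=\prod_{p\le y}(1+1/p)$. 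By Mertens' theorem,
\[
\prod_{p\le y}(1+1/p)=\frac{\prod_{p\le y}(1-1/p^2)}{\prod_{p\le y}(1-1/p)}\sim\frac{6}{\pi^2}e^\gamma\log y\sim\frac{6e^\gamma}{\pi^2}\log_3 x .
\]

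\emph{Medium and large primes.} For the middle range we bound the sum trivially by $\sum_{y<p\le z}1/p=\log(\log z/\log y)+o(1)=O(\log_4 x/\log_3 x)=o(1)$. For $p>z$ we compute the average
\[
\frac1x\sum_{n\le x}\sum_{\substack{p\mid\psi(n)\\ p>z}}\frac1p\le\sum_{p>z}\frac1p\cdot\frac{1}{x}\#\{n\le x:p\mid\psi(n)\}.
\]
Now $p\mid\psi(n)$ requires either $p^2\mid n$ or some prime $q\mid n$ with $q\equiv-1\pmod p$. By Brun--Titchmarsh, this proportion is $\ll \sum_{q\equiv -1\,(p),\,q\le x}1/q+1/p\ll \log_2 x/p$. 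The average is therefore $\ll\log_2 x\sum_{p>z}p^{-2}\ll \log_2 x/(z\log z)=o(1)$. By Markov's inequality the large-prime sum is $o(1)$ for almost all $n$. This large-prime estimate is also recorded in \cite{Guo}.

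Combining the three ranges, $R_\psi(n)=(1+o(1))\frac{6e^\gamma}{\pi^2}\log_3 x$ for all but $o(x)$ integers $n\le x$. Since $\log_3 n=\log_3 x+o(1)$ for $n>x^{1/2}$, this gives the stated normal order. The main obstacle is the small-prime step: we need the bound on the density of $n$ with no prime factor in the class $-1\bmod p$ to hold uniformly in $p$ up to $y$. We will first try the Hal\'asz / Brun sieve form $\ll x\exp\bigl(-\sum_{q\le x,\,q\equiv-1(p)}1/q\bigr)$ combined with Page/Siegel--Walfisz-type lower bounds $\sum_{q\equiv -1(p)}1/q\ge(1-\varepsilon)\log_2 x/(p-1)$ for $p\le y$. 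This is the range in which such lower bounds hold unconditionally.
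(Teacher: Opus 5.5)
Your proposal is correct and follows essentially the paper's route. The small-prime divisibility result of Guo et al.\ (the paper's Lemma~\ref{div-lemma}, whose range $p<c_1\log_2x/\log_3x$ already contains your $p\le\log_2x/(\log_3x)^2$) combined with Mertens gives the main term, and a Brun--Titchmarsh first-moment bound with Markov's inequality disposes of the large prime divisors, as in Lemma~\ref{hpsi-lemma}, where the paper cuts at $\log_2x$ rather than at $\log_2x(\log_3x)^2$. The only difference is cosmetic: you split $\log R_\psi(n)$ into three ranges and bound the middle one trivially by $O(\log_4x/\log_3x)$, whereas the paper sandwiches $R_\psi(n)$ between $\prod_{p<c_1\log_2x/\log_3x}(1+1/p)$ and $\prod_{p\le\log_2x}(1+1/p)\,e^{h_{\psi,x}(n)}$, which amounts to the same comparison of Mertens products.
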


In terms of the counting function \(C(n)\) introduced above, the two
theorems describe the growth of \(C(n)\) under iteration. In particular,
Theorem~\ref{thm:normal-order} gives
\[
C(C(n))
\sim
\frac{6e^\gamma}{\pi^2}C(n)\log_3 n
\]
for almost all \(n\). Thus, for a typical \(n\), one further iteration
of the subgroup-counting function increases its value by a factor
asymptotic to
\[
\frac{6e^\gamma}{\pi^2}\log_3 n.
\]

\section{\bf Preliminaries}
\bigskip
The proofs of our main results use four auxiliary results:
Mertens-type product estimates, a small-prime-power divisibility
property for \(\psi(n)\), an estimate for the contribution of large
prime divisors of \(\psi(n)\), and a Brun--Titchmarsh type bound for
reciprocal sums of primes in arithmetic progressions. We record them
below.

\begin{lemma}\label{sieve1-lemma}
As $x\to\infty$, we have
\begin{align}
    \prod_{p \leq x} \left(1 - \frac{1}{p}\right) 
    &= \frac{e^{-\gamma}}{\log x} \left(1 + O\left(\frac{1}{\log x}\right)\right),
    \label{3}
     \\
    \prod_{p \leq x} \left(1 + \frac{1}{p}\right) 
    &= \frac{6e^{\gamma}}{\pi^2} \log x \left(1 + O\left(\frac{1}{\log x}\right)\right).
    \label{4}
\end{align}
\end{lemma}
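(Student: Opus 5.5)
The first estimate \eqref{3} is Mertens' third theorem with the classical error term, and I would take it as the starting point. Write
\[
-\log\prod_{p\le x}\Bigl(1-\frac1p\Bigr)=\sum_{p\le x}\frac1p+\sum_{p\le x}\sum_{k\ge2}\frac1{kp^k}.
\]
For the first sum I would use Mertens' second theorem in the strong form
\[
\sum_{p\le x}\frac1p=\log_2 x+B+O\Bigl(\frac1{\log x}\Bigr),
\]
which follows by partial summation from $\sum_{p\le x}\frac{\log p}{p}=\log x+O(1)$. The double sum converges absolutely, and its tail over $p>x$ is $O(1/x)$.

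Combining these, $-\log\prod_{p\le x}(1-1/p)=\log_2 x+c+O(1/\log x)$ for some constant $c$. The identification $c=\gamma$ is the genuinely nontrivial point, and it is the main obstacle if one insists on a self-contained argument. I would not reprove it. Instead I would cite the classical evaluation, which comes from comparing with $\zeta(s)$ near $s=1$ via the Euler product, or from the standard Mertens argument using $\sum_{n\le x}\Lambda(n)/n$ and $\log\zeta(1+\delta)=-\log\delta+O(\delta)$. Exponentiating then gives \eqref{3}, since $e^{O(1/\log x)}=1+O(1/\log x)$.

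For \eqref{4} I would avoid any new analysis by using the identity
\[
\Bigl(1+\frac1p\Bigr)=\frac{1-p^{-2}}{1-p^{-1}},
\]
which gives
\[
\prod_{p\le x}\Bigl(1+\frac1p\Bigr)=\prod_{p\le x}\Bigl(1-\frac1{p^2}\Bigr)\cdot\prod_{p\le x}\Bigl(1-\frac1p\Bigr)^{-1}.
\]
The first factor equals
\[
\frac1{\zeta(2)}\prod_{p>x}\Bigl(1-\frac1{p^2}\Bigr)^{-1}=\frac6{\pi^2}\Bigl(1+O\Bigl(\frac1x\Bigr)\Bigr),
\]
because $\sum_{p>x}p^{-2}\ll 1/x$. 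The second factor, by \eqref{3}, is
\[
e^{\gamma}\log x\,\bigl(1+O(1/\log x)\bigr)^{-1}=e^{\gamma}\log x\,\bigl(1+O(1/\log x)\bigr).
\]
Multiplying the two factors, and absorbing $O(1/x)$ into $O(1/\log x)$, yields \eqref{4}.
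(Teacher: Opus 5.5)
Your argument is correct. It differs from the paper only in that the paper gives no derivation at all: it simply cites both estimates from Guo et al. Your route shows where the constants come from.

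For \eqref{3}, you use Mertens' second theorem with its $O(1/\log x)$ error term, plus the absolutely convergent prime-power correction, whose tail is $O(1/x)$. The classical identification of the resulting constant as $\gamma$ is the one external input, and you correctly flag it as the only nontrivial step.

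\eqref{4} then becomes a formal consequence of \eqref{3} rather than an independent fact. The identity $1+1/p=(1-p^{-2})/(1-p^{-1})$ turns the factor $6/\pi^2$ into $1/\zeta(2)$ from the Euler product. The error term carries over unchanged because $\prod_{p>x}(1-p^{-2})^{-1}=1+O(1/x)$.

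The paper's citation buys brevity. Yours reduces the whole lemma to Mertens' product theorem plus a short deduction. Neither is fully self-contained, since both rest on the evaluation of the Mertens constant.
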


\begin{proof}
These standard Mertens-type estimates are recorded in
Guo et al.~\cite{Guo}.
\end{proof}

\begin{lemma}\label{div-lemma}
Suppose that \(x\) is sufficiently large. Then there exists
a constant \(c_1>0\) such that, for all positive integers \(n\le x\),
with \(O(x/(\log_3x)^2)\) exceptions, \(\psi(n)\) is divisible by every
prime power \(p^a\) satisfying
\[
p^a<c_1\frac{\log_2x}{\log_3x}.
\]
\end{lemma}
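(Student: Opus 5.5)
This is the $\psi$-analogue of the classical lemma of Erdős–Granville–Pomerance–Spiro for $\phi(n)$. The plan is to show that, for most $n$, $n$ has enough prime factors $q$ with $q\equiv -1\pmod{p^a}$. Note that $q+1\mid\psi(n)$ whenever $q\,\|\,n$; more generally $q^{k-1}(q+1)\mid\psi(n)$ when $q^k\,\|\,n$, so $q+1\mid\psi(n)$ for every prime $q\mid n$. Hence it suffices to show that, apart from $O(x/(\log_3x)^2)$ exceptions, every $n\le x$ has, for each admissible prime power $m=p^a<y:=c_1\log_2x/\log_3x$, a prime divisor $q\equiv -1\pmod m$. (For $m=2$ this is automatic unless $n$ is a power of $2$ times $1$, which is a negligible set; odd primes $q$ give $2\mid q+1$.)

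Fix such an $m$. By the standard sieve upper bound (Brun's sieve, or the Hall–Tenenbaum type estimate that the number of $n\le x$ free of primes from a set $\mathcal P$ is $\ll x\exp(-\sum_{q\le x,\,q\in\mathcal P}1/q)$), the number of $n\le x$ with no prime factor $q\equiv -1\pmod m$ is
\[
\ll x\exp\Bigl(-\sum_{\substack{q\le x\\ q\equiv -1 \,(\mathrm{mod}\, m)}}\frac1q\Bigr).
\]
By Page–Siegel–Walfisz (uniform for $m\le (\log x)^A$, and certainly for $m\le\log_2x$), one has
\[
\sum_{q\le x,\ q\equiv -1\,(m)}\frac1q=\frac{\log_2x}{\phi(m)}+O\Bigl(\frac{\log m}{\phi(m)}\Bigr)
\]
(the lower-order term is handled as in Norton/Pomerance). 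The loss is only a constant in the exponent, since we need just a lower bound $\ge \log_2x/\phi(m)-C\log m/\phi(m)$, which follows from summing PNT in progressions over dyadic ranges between $\exp(m)$ and $x$. Thus the exceptional count for $m$ is $\ll x\exp(-\tfrac12\log_2x/m)$, using $\phi(m)\le m$.

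Now sum over all prime powers $m<y$; there are at most $y$ of them. Since $m<y=c_1\log_2x/\log_3x$, we get $\log_2x/m>\log_3x/c_1$, so each term is $\ll x(\log_2x)^{-1/(2c_1)}$. Choosing $c_1$ small, say $c_1=1/8$, gives a total
\[
\ll y\,x(\log_2x)^{-4}\ll x(\log_2x)^{-3},
\]
which is far smaller than $x/(\log_3x)^2$.

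The main obstacle is the uniform lower bound for the reciprocal sum of primes $q\equiv-1\pmod m$ with $m$ as large as $\log_2x$. I would cite Pomerance's result (\emph{On the distribution of amicable numbers}), namely
\[
\sum_{q\le x,\,q\equiv a\,(m)}\frac1q=\frac{\log_2x}{\phi(m)}+O\Bigl(\frac{\log m}{\phi(m)}\Bigr)
\]
uniformly for $m\le x$ and $\gcd(a,m)=1$. That is exactly the needed input, and the remaining steps are routine.
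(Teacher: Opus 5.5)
Your argument is correct. It is less a different route than the route the paper leaves implicit: the paper gives no proof of this lemma, only a citation of the $\psi$-part of Guo et al., Lemma~2.3, which is the $\psi$-analogue of the Erd\H{o}s--Granville--Pomerance--Spiro lemma for $\phi$. You supply that argument directly:
\begin{itemize}
\item you replace the primes $q\equiv1\pmod{p^a}$ used for $\phi$ by primes $q\equiv-1\pmod{p^a}$, via $q+1\mid\psi(n)$ for every prime $q\mid n$;
\item you bound the $n\le x$ with no such prime factor by the uniform sieve estimate $\ll x\exp\bigl(-\sum_{q\le x,\,q\equiv-1\,(\mathrm{mod}\,m)}1/q\bigr)$;
\item you insert the lower bound $\bigl(\log_2x-O(\log m)\bigr)/\phi(m)$ for the reciprocal prime sum;
\item you take a union bound over prime powers $m<y$.
\end{itemize}

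The citation buys only brevity. Your version makes the lemma self-contained and shows it is stronger than stated: with $c_1=1/8$ the exceptional set is $\ll x(\log_2x)^{-3}$, well inside $O(x/(\log_3x)^2)$. The stated bound is what the same argument gives with $c_1=1$, keeping the full main term $\log_2x/\phi(m)$ instead of half of it and using the count $\asymp y/\log y$ of prime powers below $y$. Any fixed $c_1>0$ serves the later applications with $z=c_1\log_2x/\log_3x$.

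One minor correction to your citation: Pomerance's uniform formula contains an extra term $1/p_{m,a}$, where $p_{m,a}$ is the least prime $\equiv a\pmod m$. This term is not $O(\log m/\phi(m))$ for general $a$ (take $a=2$). It is harmless here, because it is nonnegative and you only need a lower bound; in any case $p_{m,-1}\ge m-1$.
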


\begin{proof}
This is the \(\psi\)-part of
Guo et al.~\cite[Lemma~2.3]{Guo}.
\end{proof}

\begin{lemma}\label{hpsi-lemma}
Let \(x\) be sufficiently large, and define
\[
h_{\psi,x}(n)
:=
\sum_{\substack{p\mid\psi(n)\\p>\log_2x}}\frac1p.
\]
Then
\[
\sum_{n\le x}h_{\psi,x}(n)
\ll
\frac{x}{\log_3x}.
\]
Moreover, for almost all \(n\le x\),
\[
h_{\psi,x}(n)
<
\frac{\log_4x}{\log_3x}
=o(1).
\]
\end{lemma}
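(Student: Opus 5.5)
We interchange the order of summation. Writing \(y=\log_2 x\), we have
\[
\sum_{n\le x}h_{\psi,x}(n)=\sum_{p>y}\frac1p\,\#\{n\le x:\ p\mid\psi(n)\}.
\]
The problem therefore reduces to bounding, for each prime \(p>y\), the number of \(n\le x\) with \(p\mid\psi(n)\).

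Since \(\psi(n)=\prod_{q^a\| n}q^{a-1}(q+1)\), the condition \(p\mid\psi(n)\) forces one of two things. Either \(p^2\mid n\), or there is a prime \(q\mid n\) with \(q\equiv -1\pmod p\). The first case contributes at most \(x/p^2\) values of \(n\). Summed against the weight \(1/p\), this gives \(x\sum_{p>y}p^{-3}\ll x/y^2\), which is negligible.

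In the second case the count is at most
\[
\sum_{\substack{q\le x\\ q\equiv -1\ (p)}}\frac{x}{q}.
\]
Here I would use the Brun--Titchmarsh type bound for reciprocal sums of primes in progressions announced in the preliminaries:
\[
\sum_{\substack{q\le x\\ q\equiv -1\ (p)}}\frac1q\ll\frac{\log_2 x}{p}.
\]
This follows from Brun--Titchmarsh together with partial summation, treating \(q=p-1\) (not prime for \(p>3\)) and the small range \(q<p^2\) separately; the range \(q\le p^2\) contributes at most \(\ll 1/p\) up to logarithmic factors absorbed by \(\log_2x\). Inserting this bound gives
\[
\sum_{n\le x}h_{\psi,x}(n)\ll x\log_2x\sum_{p>\log_2 x}\frac1{p^2}\ll \frac{x\log_2x}{\log_2x\,\log_3x}=\frac{x}{\log_3x}.
\]
This uses \(\sum_{p>y}p^{-2}\ll 1/(y\log y)\) and \(\log y=\log_3x\), and it proves the first assertion.

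The main obstacle is obtaining the reciprocal-prime bound with exactly a factor \(\log_2x/p\), uniformly for \(p\) up to \(x\). For large \(p\), say \(p>\sqrt x\), the progression contains few primes, and the trivial bound \(\sum_{m\le x/p}1/(mp)\ll \log x/p\) is too weak. I would handle this range by bounding the count of \(n\) directly: every such \(n\) has a prime factor \(q\equiv-1\pmod p\) with \(q\ge 2p-1\), so the count is at most \(\sum_{k\ge 2}x/(kp-1)\ll x\log x/p\). For \(p>\sqrt x\), however, \(\sum_{p>\sqrt x}\log x/p^2\ll 1/\sqrt x\), which is negligible. For the middle range \(p\le\sqrt x\), Brun--Titchmarsh gives \(\pi(t;p,-1)\ll t/(\phi(p)\log(t/p))\) for \(t>p\). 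Partial summation then yields \(\ll(\log_2x-\log_2 p+1)/p\ll\log_2x/p\), as required.

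The second assertion follows from Markov's inequality. The number of \(n\le x\) with \(h_{\psi,x}(n)\ge \log_4x/\log_3x\) is at most
\[
\frac{\log_3x}{\log_4x}\sum_{n\le x}h_{\psi,x}(n)\ll\frac{x}{\log_4x}=o(x).
\]
Hence \(h_{\psi,x}(n)<\log_4 x/\log_3 x\) for almost all \(n\le x\).
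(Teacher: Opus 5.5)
Your proof is correct, but it takes a different route from the paper. The paper does not prove the first estimate; it cites the \(\psi\)-part of Guo et al.~\cite[Lemma~2.5]{Guo} and then applies Markov's inequality exactly as you do.

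You derive the bound internally. Your derivation is essentially the paper's own treatment of \(A_2\) in Section~3, with the weight \(\log q/q\) replaced by \(1/q\):
\begin{itemize}
\item the dichotomy that \(p\mid\psi(n)\) forces \(p^2\mid n\) or a prime \(r\mid n\) with \(r\equiv-1\pmod p\);
\item Lemma~\ref{BT-lemma}, which gives \(\#\{n\le x:p\mid\psi(n)\}\ll x\log_2x/p\);
\item the tail bound \(\sum_{p>y}p^{-2}\ll 1/(y\log y)\), whose extra saving of \(\log y=\log_3x\) over the paper's \(\sum_{q>y}\log q/q^2\ll 1/y\) is exactly what produces \(x/\log_3x\) instead of \(x\).
\end{itemize}
What your route buys is a self-contained lemma resting only on Brun--Titchmarsh and Chebyshev. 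It costs nothing extra, since the paper already proves Lemma~\ref{BT-lemma}.

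Two minor remarks, neither of which affects your conclusion.
\begin{itemize}
\item Your separate treatment of \(p>\sqrt x\) is valid but unnecessary. Brun--Titchmarsh with partial summation, as in the proof of Lemma~\ref{BT-lemma}, is already uniform for all primes \(p<x\).
\item Your intermediate bound \((\log_2x-\log_2p+1)/p\) overstates what Brun--Titchmarsh gives. For \(p<t<p^2\) it only yields \(\pi(t;p,-1)\ll t/(p\log(t/p))\), so the range \(q\le p^2\) contributes about \(\log_2p/p\). There the trivial bound \(\log p/p\) would not be absorbed by \(\log_2x\). Partial summation actually gives \((\log\log(x/p)+O(1))/p\), which is still \(\ll\log_2x/p\), as you need.
\end{itemize}
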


\begin{proof}
The first estimate is the \(\psi\)-part of
Guo et al.~\cite[Lemma~2.5]{Guo}. By Markov's inequality,
\[
\begin{aligned}
\#\left\{n\le x:
h_{\psi,x}(n)\ge
\frac{\log_4x}{\log_3x}\right\}
&\ll
\frac{x}{\log_4x}
=o(x).
\end{aligned}
\]
This proves the second assertion.
\end{proof}

\begin{lemma}\label{BT-lemma}
Let \(x\) be sufficiently large. Uniformly for primes \(q<x\),
\[
\sum_{\substack{r\le x\\r\equiv-1\pmod q}}
\frac1r
\ll
\frac{\log_2x}{q},
\]
where \(r\) runs over primes.
\end{lemma}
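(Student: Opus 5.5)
We will derive the bound from the Brun--Titchmarsh inequality combined with partial summation. For a prime $q<x$, write
\[
\pi(t;q,-1):=\#\{r\le t:\ r\equiv -1\pmod q\}.
\]
If $q=2$, the condition is vacuous for odd $r$. The sum is then $\sum_{r\le x}1/r=\log_2 x+O(1)\ll \log_2 x/2$ by Mertens' theorem, so the claim holds. Assume from now on that $q\ge3$.

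For $q<t\le x$, the Brun--Titchmarsh inequality gives
\[
\pi(t;q,-1)\le \frac{2t}{\phi(q)\log(t/q)}\ll \frac{t}{q\log(t/q)}.
\]
For small $t$ we also have the trivial bound $\pi(t;q,-1)\le t/q+1$.

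The primes $r\equiv-1\pmod q$ satisfy $r\ge q-1$, and in fact $r\ge 2q-1$ once $q\ge 3$, since $r=q-1$ is even. We split the range of $r$ at $q^2$.

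\textbf{Range $r\le q^2$.} Here we do not use Brun--Titchmarsh at all; we count integers in the progression. The terms are $r=kq-1$ with $2\le k\le q+1$, so
\[
\sum_{\substack{r\le q^2\\ r\equiv -1 \ (q)}}\frac1r\le \sum_{2\le k\le q+1}\frac{1}{kq-1}\ll \frac{\log q}{q}.
\]
This bound is not good enough when $\log q$ exceeds $\log_2 x$, which is the main obstacle. To fix it, we use Brun--Titchmarsh on the dyadic ranges $q^{1+\delta}<t$ and restrict the trivial count to the range $r\le q^{1+\delta}$ for a small fixed $\delta$. That trivial count yields
\[
\frac{1}{q}\sum_{k\le q^{\delta}}\frac1k\ll \frac{\delta\log q}{q},
\]
which is still of order $\log q/q$. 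Small $\delta$ does not help, because the $\log q$ is intrinsic to the trivial count.

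\textbf{Choice of method.} Instead we apply Brun--Titchmarsh through partial summation for all $t\ge 2q$, where $\log(t/q)\ge \log 2$:
\[
\sum_{\substack{r\le x\\ r\equiv -1\ (q)}}\frac1r
=\frac{\pi(x;q,-1)}{x}+\int_{2q}^{x}\frac{\pi(t;q,-1)}{t^2}\,dt
\ll \frac1q+\frac1q\int_{2q}^{x}\frac{dt}{t\log(t/q)}.
\]
With the substitution $u=t/q$, the integral becomes
\[
\int_{2}^{x/q}\frac{du}{u\log u}=\log\log(x/q)-\log\log 2\ll \log_2 x.
\]
This gives the bound $\ll \log_2x/q$ uniformly for $q<x$, which resolves the obstacle. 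The boundary term at $t=2q$ also contributes $O(1/q)$.

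The one point that needs care is the range $q<x<2q$. There at most one prime $r\le x$ satisfies $r\equiv-1\pmod q$, namely $r=q-1$, which is not prime once $q\ge 3$. So the sum is $O(1/q)$ in that range. Collecting the pieces proves the lemma.
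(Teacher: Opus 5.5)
Your final argument is correct and is essentially the paper's proof: Brun--Titchmarsh for $t\ge 2q$, Abel summation, and the substitution $u=t/q$ giving $\log\log(x/q)-\log\log 2\ll\log_2 x$, with the primes $r<2q$ handled trivially. The abandoned split at $q^2$ and the separate case $q=2$ are unnecessary. One small bookkeeping slip: the candidates below $2q$ are both $r=q-1$ and $r=2q-1$, and $q-1=2$ is prime when $q=3$. So bound these two terms by $1/(q-1)+1/(2q-1)\ll 1/q$ as the paper does, rather than claiming only $r=q-1$ occurs and is never prime.
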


\begin{proof}
Let
\[
\pi(t;q,-1)
:=
\#\{r\le t:r\ \text{is prime},\ r\equiv-1\pmod q\}.
\]
If \(r\le2q\) and \(r\equiv-1\pmod q\), then
\(r=kq-1\) for some positive integer \(k\), and hence \(k=1\) or \(2\).
Thus
\[
\sum_{\substack{r\le2q\\r\equiv-1\pmod q}}\frac1r
\le
\frac1{q-1}+\frac1{2q-1}
\ll\frac1q.
\]
If \(x\le2q\), the desired estimate follows immediately. Hence we may
assume that \(x>2q\).

For \(t\ge2q\), since \((-1,q)=1\), the Brun--Titchmarsh inequality
\cite[Theorem~3.9]{MontgomeryVaughan} gives
\[
\pi(t;q,-1)
\ll
\frac{t}{\phi(q)\log(t/q)}
\ll
\frac{t}{q\log(t/q)},
\]
where we used \(\phi(q)=q-1\asymp q\).

By Abel summation,
\[
\begin{aligned}
\sum_{\substack{2q<r\le x\\r\equiv-1\pmod q}}\frac1r
&=
\frac{\pi(x;q,-1)}x
-\frac{\pi(2q;q,-1)}{2q}
+
\int_{2q}^x\frac{\pi(t;q,-1)}{t^2}\,dt\\
&\ll
\frac1q+
\frac1q\int_{2q}^x
\frac{dt}{t\log(t/q)}.
\end{aligned}
\]
With \(u=t/q\),
\[
\int_{2q}^x\frac{dt}{t\log(t/q)}
=
\int_2^{x/q}\frac{du}{u\log u}
=
\log\log(x/q)-\log\log2
\ll\log_2x.
\]
Combining this with the contribution from \(r\le2q\), we obtain
\[
\sum_{\substack{r\le x\\r\equiv-1\pmod q}}\frac1r
\ll
\frac{\log_2x}{q}.
\]
\end{proof}

\section{Proof of \eqref{eq:mean-psi} and
\eqref{eq:mean-reciprocal-psi} of Theorem~\ref{thm:mean-values}}

First, we observe that
\[
R_\psi(n)
=
\frac{\psi(\psi(n))}{\psi(n)}
=
\prod_{p\mid\psi(n)}
\left(1+\frac1p\right).
\]
Let $\mu$ denote the M\"obius function. Since $\mu^2(d)$ is the
characteristic function of the squarefree integers, for every positive
integer $m$,
\[
\prod_{p\mid m}\left(1+\frac1p\right)
=
\sum_{d\mid m}\frac{\mu^2(d)}{d}.
\]
Taking $m=\psi(n)$, we obtain
\[
R_\psi(n)
=
\sum_{d\mid\psi(n)}\frac{\mu^2(d)}{d}.
\]
Summing over $n\le x$, we obtain
\[
\sum_{n\le x}R_\psi(n)
=
\sum_{n\le x}
\sum_{d\mid\psi(n)}
\frac{\mu^2(d)}{d}.
\]
Since all sums are finite, we may interchange the order of summation.
Thus
\[
\sum_{n\le x}R_\psi(n)
=
\sum_{d\ge1}
\frac{\mu^2(d)}{d}
\sum_{\substack{n\le x\\ d\mid\psi(n)}}1.
\]
Define
\[
S_\psi(x,d)
:=
\#\{n\le x:d\mid\psi(n)\}.
\]
Then
\[{\sum_{n\le x}R_\psi(n)
=
\sum_{d\ge1}
\frac{\mu^2(d)}{d}\,S_\psi(x,d).}\]

Put
\[
y:=\log_2 x,
\]

and let \(P(d)\) denote the largest prime factor of \(d\), with
\(P(1):=1\). We split the above sum according to whether \(P(d)\le y\)
or \(P(d)>y\). Thus
\[
\sum_{n\le x}R_\psi(n)
=
\sum_{\substack{d\ge1\\P(d)\le y}}
\frac{\mu^2(d)}{d}\,S_\psi(x,d)
+
\sum_{\substack{d\ge1\\P(d)>y}}
\frac{\mu^2(d)}{d}\,S_\psi(x,d).
\]
Write
\[
A_1:=
\sum_{\substack{d\ge1\\P(d)\le y}}
\frac{\mu^2(d)}{d}\,S_\psi(x,d),
\]
and
\[
A_2:=
\sum_{\substack{d\ge1\\P(d)>y}}
\frac{\mu^2(d)}{d}\,S_\psi(x,d).
\]
Hence
\[
\sum_{n\le x}R_\psi(n)=A_1+A_2.
\]

For the contribution of the integers \(d\) with \(P(d)\le y\), we have
\[
A_1
\le
x\sum_{P(d)\le y}\frac{\mu^2(d)}d
=
x\prod_{p\le y}\left(1+\frac1p\right).
\]
By Lemma~\ref{sieve1-lemma},
\[
A_1
\le
\frac{6e^\gamma}{\pi^2}
x\log y
\left(1+O\left(\frac1{\log y}\right)\right).
\]
Since \(y=\log_2 x\), it follows that

\begin{equation}\label{A1}
A_1
\le
\frac{6e^\gamma}{\pi^2}
x\log_3 x
\left(1+O\left(\frac1{\log_3 x}\right)\right)
=
\left(\frac{6e^\gamma}{\pi^2}+o(1)\right)x\log_3 x.
\end{equation}

We now estimate
\[
A_2
:=
\sum_{\substack{d\ge1\\ P(d)>y}}
\frac{\mu^2(d)}{d}\,S_\psi(x,d).
\]
For each \(d\) occurring in \(A_2\), let
\[
q:=P(d).
\]
Since \(q\mid d\), the condition \(d\mid\psi(n)\) implies
\(q\mid\psi(n)\). Hence
\[
\{n\le x:d\mid\psi(n)\}
\subseteq
\{n\le x:q\mid\psi(n)\}.
\]
Therefore,
\[
S_\psi(x,d)
=
\#\{n\le x:d\mid\psi(n)\}
\le
\#\{n\le x:q\mid\psi(n)\}
=
S_\psi(x,q).
\]
Grouping the terms according to \(q=P(d)\), we obtain
\[
\begin{aligned}
A_2
&=
\sum_{q>y}
\sum_{P(d)=q}
\frac{\mu^2(d)}{d}\,S_\psi(x,d)\\
&\le
\sum_{q>y}
S_\psi(x,q)
\sum_{P(d)=q}
\frac{\mu^2(d)}{d}.
\end{aligned}
\]

Since $\mu^2(d)\neq0$ only when $d$ is squarefree, for $P(d)=q$
we may write uniquely
\[
d=qt,\qquad P(t)<q.
\]
In particular, $(q,t)=1$. Since $\mu^2$ is multiplicative and
$\mu^2(q)=1$ for the prime $q$, we have
\[
\mu^2(d)=\mu^2(qt)=\mu^2(q)\mu^2(t)=\mu^2(t).
\]
Therefore,
\[
\begin{aligned}
\sum_{P(d)=q}\frac{\mu^2(d)}{d}
&=
\sum_{P(t)<q}\frac{\mu^2(qt)}{qt}\\
&=
\frac1q\sum_{P(t)<q}\frac{\mu^2(t)}{t}\\
&
=\frac1q\prod_{p<q}\left(1+\frac1p\right)
\ll\frac{\log q}{q}.
\end{aligned}\]

where Lemma~\ref{sieve1-lemma} was used in the last step. Therefore,
\[
A_2
\ll
\sum_{q>y}\frac{\log q}{q}S_\psi(x,q).
\]

Recall that
\[
\psi(n)
=
n\prod_{r\mid n}\left(1+\frac1r\right).
\]
Thus, if a prime \(q\mid\psi(n)\), then either \(q^2\mid n\), or
there exists a prime divisor \(r\mid n\) such that
\[
q\mid r+1,
\]
that is,
\[
r\equiv-1\pmod q.
\]
Consequently,
\[
\begin{aligned}
S_\psi(x,q)
&=
\#\{n\le x:q\mid\psi(n)\}\\
&\le
\frac{x}{q^2}
+
\sum_{\substack{r\le x\\r\equiv-1\pmod q}}
\frac{x}{r}.
\end{aligned}
\]
Hence, by Lemma~\ref{BT-lemma},
\[
S_\psi(x,q)
\ll
\frac{x}{q^2}
+
\frac{x\log_2x}{q}
\ll
\frac{x\log_2x}{q},
\]
since \(q>y=\log_2x\).

It follows that
\[
A_2
\ll
x\log_2x
\sum_{q>y}\frac{\log q}{q^2}.
\]
To estimate the remaining prime sum, let
\[
\vartheta(t):=\sum_{q\le t}\log q.
\]
By Chebyshev's estimate,
\[
\vartheta(t)\ll t.
\]

For \(T>y\), partial summation gives
\[
\sum_{y<q\le T}\frac{\log q}{q^2}
=
\frac{\vartheta(T)}{T^2}
-\frac{\vartheta(y)}{y^2}
+
2\int_y^T\frac{\vartheta(t)}{t^3}\,dt.
\]
Since \(\vartheta(t)\ll t\), we have
\[
\frac{\vartheta(T)}{T^2}\to0
\qquad (T\to\infty).
\]
Hence, letting \(T\to\infty\),
\[
\sum_{q>y}\frac{\log q}{q^2}
\ll
\int_y^\infty\frac{dt}{t^2}
\ll
\frac1y.
\]
Therefore,
\[ A_2
\ll
x\log_2x
\sum_{q>y}\frac{\log q}{q^2}
\ll
\frac{x\log_2x}{y}.\]
   
Since \(y=\log_2x\), we obtain
\begin{equation}
A_2\ll x.\label{A2}
\end{equation}

Combining \eqref{A1} and \eqref{A2}, we obtain
\[
\begin{aligned}
A
:=\sum_{n\le x}R_\psi(n)
&=A_1+A_2\\
&\le
\frac{6e^\gamma}{\pi^2}
x\log_3x
\left(1+O\left(\frac1{\log_3x}\right)\right)
+O(x)\\
&=
\frac{6e^\gamma}{\pi^2}x\log_3x+O(x).
\end{aligned}
\]
Since \(O(x)=o(x\log_3x)\), it follows that
\begin{equation}\label{A}
{
A
\le
\left(\frac{6e^\gamma}{\pi^2}+o(1)\right)
x\log_3x.
}
\end{equation}

We next consider
\[
B
:=
\sum_{n\le x}\frac{1}{R_\psi(n)}
=
\sum_{n\le x}\frac{\psi(n)}{\psi(\psi(n))}.
\]
Put
\[
z:=c_1\frac{\log_2 x}{\log_3 x},
\]
where \(c_1\) is the constant appearing in Lemma~\ref{div-lemma}. By that lemma, there exists an exceptional set \(\mathcal E(x)\subseteq[1,x]\) with
\[
|\mathcal E(x)|
=
O\left(\frac{x}{(\log_3 x)^2}\right),
\]
such that, for every \(n\le x\) with \(n\notin\mathcal E(x)\),
all primes \(p<z\) divide \(\psi(n)\). Hence
\[
R_\psi(n)
=
\prod_{p\mid\psi(n)}
\left(1+\frac1p\right)
\ge
\prod_{p<z}\left(1+\frac1p\right).
\]
By Lemma~\ref{sieve1-lemma},
\[
\frac1{R_\psi(n)}
\le
\frac{\pi^2e^{-\gamma}}{6\log z}
\left(1+O\left(\frac1{\log z}\right)\right).
\]
Since
\[
\log z
=
\log_3x-\log_4x+O(1)
=
(1+o(1))\log_3x,
\]
we obtain
\[
\frac1{R_\psi(n)}
\le
\left(\frac{\pi^2e^{-\gamma}}6+o(1)\right)
\frac1{\log_3x}
\qquad
(n\notin\mathcal E(x)).
\]

On the other hand, \(R_\psi(n)\ge1\) for every \(n\), and hence
\(1/R_\psi(n)\le1\). Therefore,
\begin{equation}
\begin{aligned}\label{B}
B
&=
\sum_{\substack{n\le x\\n\notin\mathcal E(x)}}
\frac1{R_\psi(n)}
+
\sum_{\substack{n\le x\\n\in\mathcal E(x)}}
\frac1{R_\psi(n)}\\
&\le
\left(\frac{\pi^2e^{-\gamma}}6+o(1)\right)
\frac{x}{\log_3x}
+
O\left(\frac{x}{(\log_3x)^2}\right)\\
&=
\left(\frac{\pi^2e^{-\gamma}}6+o(1)\right)
\frac{x}{\log_3x}.
\end{aligned}
\end{equation}

It remains to derive the corresponding lower bounds. By the
Cauchy--Schwarz inequality,
\[
\left(\sum_{n\le x}a_nb_n\right)^2
\le
\left(\sum_{n\le x}a_n^2\right)
\left(\sum_{n\le x}b_n^2\right).
\]
Taking
\[
a_n=\sqrt{R_\psi(n)},
\qquad
b_n=\frac{1}{\sqrt{R_\psi(n)}},
\]
we have \(a_nb_n=1\), and hence
\[
\begin{aligned}
\left(\sum_{n\le x}1\right)^2
&\le
\left(\sum_{n\le x}R_\psi(n)\right)
\left(\sum_{n\le x}\frac{1}{R_\psi(n)}\right)\\
&=AB.
\end{aligned}
\]
Thus
\[
\lfloor x\rfloor^2\le AB.
\]
Since \(\lfloor x\rfloor=x+O(1)\), we have
\[
\lfloor x\rfloor^2=(1+o(1))x^2.
\]
Combining \eqref{A} and \eqref{B}, we obtain
\[
\begin{aligned}
AB
&\le
\left(\frac{6e^\gamma}{\pi^2}+o(1)\right)
x\log_3x
\left(\frac{\pi^2e^{-\gamma}}6+o(1)\right)
\frac{x}{\log_3x}\\
&=(1+o(1))x^2,
\end{aligned}
\]
since
\[
\frac{6e^\gamma}{\pi^2}
\cdot
\frac{\pi^2e^{-\gamma}}6
=1.
\]
Therefore,
\[
AB\sim x^2.
\]

We now derive the corresponding lower bounds for \(A\) and \(B\).
From \(AB\ge\lfloor x\rfloor^2\) and \eqref{B},
\[
\begin{aligned}
A
&\ge
\frac{\lfloor x\rfloor^2}{B}\\
&\ge
\frac{(1+o(1))x^2}
{\left(\frac{\pi^2e^{-\gamma}}6+o(1)\right)
x/\log_3x}\\
&=
\left(\frac{6e^\gamma}{\pi^2}+o(1)\right)
x\log_3x.
\end{aligned}
\]
Together with the upper bound \eqref{A}, this gives
\begin{equation}\label{Anormal}
{A\sim\frac{6e^\gamma}{\pi^2}x\log_3x.}
\end{equation}

Similarly, using \(AB\ge\lfloor x\rfloor^2\) and \eqref{A},
\[
\begin{aligned}
B
&\ge
\frac{\lfloor x\rfloor^2}{A}\\
&\ge
\frac{(1+o(1))x^2}
{\left(\frac{6e^\gamma}{\pi^2}+o(1)\right)
x\log_3x}\\
&=
\left(\frac{\pi^2e^{-\gamma}}6+o(1)\right)
\frac{x}{\log_3x}.
\end{aligned}
\]
Together with the upper bound \eqref{B}, we conclude that
\begin{equation}\label{Bnormal}
{B\sim\frac{\pi^2e^{-\gamma}}6\frac{x}{\log_3x}.}
\end{equation}

This completes the proof of \eqref{eq:mean-psi} and
\eqref{eq:mean-reciprocal-psi} in Theorem~\ref{thm:mean-values}.

\section{Proof of \eqref{eq:log-mean-psi} of Theorem~\ref{thm:mean-values}}

Put
\[
L_\psi(x)
:=
\sum_{n\le x}\log R_\psi(n).
\]
Since
\[
R_\psi(n)
=
\prod_{p\mid\psi(n)}
\left(1+\frac1p\right),
\]
we have
\[
\log R_\psi(n)
=
\sum_{p\mid\psi(n)}
\log\left(1+\frac1p\right).
\]

We first prove the upper bound. Put
\[
y:=\log_2x.
\]
Since \(\log(1+u)\le u\) for \(u>0\), we have
\[
\begin{aligned}
L_\psi(x)
&\le
\sum_{n\le x}
\sum_{\substack{p\mid\psi(n)\\ p\le y}}\frac1p
+
\sum_{n\le x}
\sum_{\substack{p\mid\psi(n)\\ p>y}}\frac1p\\
&\le
x\sum_{p\le y}\frac1p
+
\sum_{n\le x}h_{\psi,x}(n).
\end{aligned}
\]
Since \(y\to\infty\) as \(x\to\infty\), Mertens' formula gives
\[
\sum_{p\le y}\frac1p
=
\log_2y+O(1)
=
\log_4x+O(1).
\]
Moreover, by Lemma~\ref{hpsi-lemma},
\[
\sum_{n\le x}h_{\psi,x}(n)
\ll
\frac{x}{\log_3x}.
\]
Therefore,
\[
L_\psi(x)
\le
x\log_4x+O(x),
\]
and hence
\begin{equation}\label{L-upper}
L_\psi(x)
\le
x\log_4x+O(x).
\end{equation}

For the lower bound, put
\[
z:=c_1\frac{\log_2x}{\log_3x}.
\]
By Lemma~\ref{div-lemma}, apart from an exceptional set
\(\mathcal E(x)\) satisfying
\[
|\mathcal E(x)|
=
O\left(\frac{x}{(\log_3x)^2}\right),
\]
every prime \(p<z\) divides \(\psi(n)\). Thus, for
\(n\notin\mathcal E(x)\),
\[
R_\psi(n)
\ge
\prod_{p<z}\left(1+\frac1p\right).
\]
By Lemma~\ref{sieve1-lemma},
\[
\prod_{p<z}\left(1+\frac1p\right)
=
\frac{6e^\gamma}{\pi^2}\log z
\left(1+O\left(\frac1{\log z}\right)\right).
\]
Since
\[
\log z
=
\log_3x-\log_4x+O(1)
=
(1+o(1))\log_3x,
\]
taking logarithms gives
\[
\log R_\psi(n)
\ge
\log\log z+O(1)
=
\log_4x+O(1)
\qquad
(n\notin\mathcal E(x)).
\]

Since \(R_\psi(n)\ge1\) for every \(n\), we have
\(\log R_\psi(n)\ge0\). Therefore,
\[
\begin{aligned}
L_\psi(x)
&\ge
\sum_{\substack{n\le x\\n\notin\mathcal E(x)}}
\log R_\psi(n)\\
&\ge
\left(
x-O\left(\frac{x}{(\log_3x)^2}\right)
\right)
\left(\log_4x+O(1)\right)\\
&=
x\log_4x+O(x).
\end{aligned}
\]
Thus
\begin{equation}\label{L-lower}
L_\psi(x)
\ge
x\log_4x+O(x).
\end{equation}

Combining \eqref{L-upper} and \eqref{L-lower}, we obtain
\[
L_\psi(x)
=
x\log_4x+O(x).
\]
Since \(\log_4x\to\infty\) as \(x\to\infty\), it follows that
\[
L_\psi(x)
\sim
x\log_4x.
\]
Therefore,
\[
\sum_{n\le x}
\log\frac{\psi(\psi(n))}{\psi(n)}
\sim
x\log_4x.
\]

This completes the proof of \eqref{eq:log-mean-psi}.

\section{Proof of Theorem~\ref{thm:normal-order}}
Put
\[
R_\psi(n)
=
\frac{\psi(\psi(n))}{\psi(n)}
=
\prod_{p\mid\psi(n)}
\left(1+\frac1p\right).
\]

We first establish a lower bound. Let
\[
z:=c_1\frac{\log_2x}{\log_3x},
\]
where \(c_1\) is the constant appearing in
Lemma~\ref{div-lemma}. Apart from
\(O(x/(\log_3x)^2)=o(x)\) exceptions, every prime \(p<z\)
divides \(\psi(n)\). Hence
\[
R_\psi(n)
\ge
\prod_{p<z}\left(1+\frac1p\right).
\]
By Lemma~\ref{sieve1-lemma},
\[
\prod_{p<z}\left(1+\frac1p\right)
=
\frac{6e^\gamma}{\pi^2}\log z
\left(1+O\left(\frac1{\log z}\right)\right).
\]
Since
\[
\log z
=
(1+o(1))\log_3x,
\]
we obtain
\begin{equation}\label{min}
R_\psi(n)
\ge
\left(\frac{6e^\gamma}{\pi^2}-o(1)\right)
\log_3x
\end{equation}

for all but \(o(x)\) integers \(n\le x\).

For the upper bound, put \(y=\log_2x\). For every \(n\le x\),
\[
\begin{aligned}
R_\psi(n)
&=
\prod_{\substack{p\mid\psi(n)\\p\le y}}
\left(1+\frac1p\right)
\prod_{\substack{p\mid\psi(n)\\p>y}}
\left(1+\frac1p\right)\\
&\le
\prod_{p\le y}\left(1+\frac1p\right)
e^{h_{\psi,x}(n)},
\end{aligned}
\]
where we used \(1+u\le e^u\). By Lemma~\ref{sieve1-lemma},
\[
\prod_{p\le y}\left(1+\frac1p\right)
=
\left(\frac{6e^\gamma}{\pi^2}+o(1)\right)\log_3x,
\]
and hence
\[
R_\psi(n)
\le
\left(\frac{6e^\gamma}{\pi^2}+o(1)\right)
\log_3x\,e^{h_{\psi,x}(n)}.
\]

By Lemma~\ref{hpsi-lemma}, for almost all \(n\le x\),
\[
h_{\psi,x}(n)
<
\frac{\log_4x}{\log_3x}
=
o(1).
\]
Hence
\[
e^{h_{\psi,x}(n)}
=
1+o(1)
\]
for almost all \(n\le x\). Consequently,
\begin{equation}\label{max}
R_\psi(n)
\le
\left(\frac{6e^\gamma}{\pi^2}+o(1)\right)\log_3x
\end{equation}
for almost all \(n\le x\).

Combining \eqref{min} and \eqref{max}, and noting that the union of
the corresponding exceptional sets has cardinality \(o(x)\), we obtain
\[
R_\psi(n)
=
\left(\frac{6e^\gamma}{\pi^2}+o(1)\right)\log_3x
\]
for all but \(o(x)\) integers \(n\le x\).

Finally, for \(\sqrt{x}<n\le x\),
\[
\log_3n
=
\log_3x+O\left(\frac1{\log_2x}\right)
\sim\log_3x
\]
uniformly, while the omitted integers \(n\le\sqrt{x}\) are \(o(x)\)
in number. Hence
\[
\frac{\psi(\psi(n))}{\psi(n)}
\sim
\frac{6e^\gamma}{\pi^2}\log_3n
\]
for almost all positive integers \(n\). This completes the proof of Theorem~\ref{thm:normal-order}.

\section{\bf Concluding Remark}
\medskip

\begin{remark}
The preceding results naturally raise the question of the behavior of
higher iterates of the Dedekind function. For each fixed integer
\(k\ge2\), let
\[
R_{\psi,k}(n)
:=
\frac{\psi^{(k+1)}(n)}{\psi^{(k)}(n)},
\]
where \(\psi^{(k)}\) denotes the \(k\)-fold iterate of \(\psi\).
It would be interesting to determine the average and normal orders of
\(R_{\psi,k}(n)\). In particular, does the successive-iterate ratio
continue to admit a normal order for every fixed \(k\ge2\), and how
does this order depend on \(k\)?
\end{remark}

\end{document}